\documentclass[a4paper, 12pt]{article}
\addtolength{\hoffset}{0.6cm}
\setlength{\textwidth}{125mm}
\setlength{\pagedepth}{185mm}

\usepackage{amsmath}
\usepackage{amsfonts}
\usepackage{amsthm}
\usepackage{array}
\usepackage{url}
\usepackage{hyperref}
\usepackage{cite} 
\usepackage{authblk}

\theoremstyle{plain}
\newtheorem{thm}{Theorem} 

\newtheorem{lem}{Lemma}

\theoremstyle{remark}
\newtheorem*{remarks*}{Remarks}

\newtheorem*{rem*}{Remark}

\theoremstyle{definition}
\newtheorem{defn}{Definition}

\allowdisplaybreaks

\title{Residual periodicity on the Markoff Surface}

\author{Solomon Vishkautsan\footnote{author is supported by the ERC-Grant ``Diophantine Problems," No. 267273.}
}
\affil{\small{Scuola Normale Superiore, Piazza dei Cavalieri 7, 56126 Pisa (Italy)\\e-mail: \textit{solomon.vishkautsan@sns.it}}}


\date{July 2015}

\makeatletter
\let\thetitle\@title
\let\theauthor\@author
\makeatother

\begin{document}
\maketitle

\begin{abstract}
A case study of arithmetic dynamics over the rationals on the Markoff surface is presented, in particular the local-global dynamical property of strong residual periodicity. 
The dynamical system induced by the composition of any two of the reflections from the three special points at infinity on the Markoff surface is shown to be strongly residually periodic. This residual periodicity is explained by the existence of periodic conic sections of the Markoff surface with no rational points. It is also proven that cutting these conic sections from the surface eliminates strong residual periodicity.
\end{abstract}

\section{Introduction}

\subsection{The Markoff surface}
\quad \\
We study the \emph{Markoff surface}, given by the affine equation
\begin{equation} \label{eq:markoff-with-3}
M: \quad x^2+y^2+z^2=3xyz
\end{equation}
from an arithmetic dynamics perspective. Classically this surface was first studied by Markoff~\cite{article:markoff1880}, who proved that there are infinitely many integral points, generated from the point $(1,1,1)$ by repeated applications of three special automorphisms of the surface (see \eqref{eq:reflections} below). This surface has many connections to problems in Diophantine approximation and Diophantine geometry (e.g., Cassels~\cite{book:cassels1957}, Zagier~\cite{article:zagier1982}, Silverman~\cite{article:silverman1990}, Baragar~\cite{article:baragar1996} and Corvaja and Zannier~\cite{article:corvaja-zannier2006}, just to state a few).

In terms of dynamics over $\mathbb{Q}$ (as opposed to the dynamical behavior of the integral points on $M$), the constant $3$ in \eqref{eq:markoff-with-3} can be replaced with any $a\in\mathbb{Q}\setminus\{0\}$, as these surfaces are all isomorphic over $\mathbb{Q}$. Moreover, these isomorphisms will only affect the dynamical behavior modulo $p$ at a finite number of primes $p$, so that strong residual periodicity (see \S\ref{subsec:residual-periodicity} below) is invariant under them. In fact, for what follows, we replace $3$ with $1$, to make calculations simpler; i.e., we take $M$ to be
\begin{equation} \label{eq:markoff-with-1}
M: \quad x^2+y^2+z^2=xyz.
\end{equation}

Geometrically, the Markoff surface is a singular cubic surface, with a unique singular point at $(0,0,0)$. It has three lines at infinity, intersecting at three distinct points. The following automorphisms of $M$ are of particular interest:
\begin{equation} \label{eq:reflections}
\begin{aligned}
\phi_1: (x,y,z) \mapsto (yz-x,y,z), \\
\phi_2: (x,y,z) \mapsto (x,xz-y,z), \\
\phi_3: (x,y,z) \mapsto (x,y,xy-z). 
\end{aligned}
\end{equation}
Each automorphism $\phi_i$, $i=1,2,3$, is an involution of the surface $M$; i.e.\ $\phi_i^2=\operatorname{id}$. Geometrically, each $\phi_i$ is a reflection through one of the three intersection points of the three lines at infinity of $M$: 
each line (not at infinity) going through one of the three points, intersects the surface $M$ at two other points (counted with multiplicity); the respective automorphism $\phi_i$ interchanges between these two points. Together with the projective automorphisms of the surface, these reflections generate the group $\mathcal{A}$ of birational automorphisms of the (projective closure of the) surface that restrict to automorphisms on the affine part of the surface. 
The subgroup of $\mathcal{A}$ generated by $\phi_1,\phi_2,\phi_3$ is isomorphic to the free product
\begin{equation}
\left<\phi_1\right>*\left<\phi_2\right>*\left<\phi_3\right>.
\end{equation}
In particular, the product of any two of the three reflections is an automorphism of infinite order of the Markoff surface (cf. \`El'-Huti~\cite{article:el-huti1974}). 

Cantat and Loray~\cite{article:cantat-loray2009} and Cantat~\cite{article:cantat2009} studied holomorphic and real dynamics of the automorphisms of a family of surfaces of the form
\begin{equation}
S_{(A,B,C,D)}: x^2+y^2+z^2+xyz=Ax+By+Cz+D
\end{equation}
(the Markoff surface is the case $A=B=C=D=0$ with a change of variables). In particular, they proved that the only real periodic point of automorphisms of infinite order of the Markoff surface, is the fixed and singular point $(0,0,0)$ (cf.\ proposition 3.2 and \S{5.1} in~\cite{article:cantat-loray2009} and table 1 in~\cite{article:cantat2009}).

In this article we study the arithmetic dynamics of automorphisms of the form $\phi_i\phi_j$ (for distinct $i$ and $j$) of the Markoff surface, and obtain results on a local-global arithmetic dynamics property called \textit{strong residual periodicity}, which is explained in the next subsection.

\subsection{Residual periodicty and main results} \label{subsec:residual-periodicity}
\quad \\
We define an \textit{arithmetic-geometric dynamical system} to be a pair $\mathcal{D}=(X, f)$, where $X\subseteq\mathbb{P}^N$ is a quasiprojective variety defined over a number field $K/\mathbb{Q}$, with a given embedding in projective space, and $f:X\rightarrow{X}$ is an endomorphism of $X$. The dynamics of this system are induced by iterating $f$ on rational points $P\in{X(K)}$, i.e.\ the orbits
\begin{equation}
P, f(P), f(f(P)), f(f(f(P))), \ldots .
\end{equation}
A point $P\in{X(K)}$ is called \textit{periodic} if $f^n(P) = P$ for some positive integer $n$. The minimal such $n$ is called the \textit{exact period} of $P$.

For all but finitely many primes in the number field $K$ (these primes correspond to the primes of good reduction of both $X$ and $f$, cf.\ Silverman~\cite{book:silverman2007}, Hutz~\cite[\S{2}]{article:hutz2009} and Bandman, Grunewald and Kunyavski{\u\i}~\cite[\S{6}]{article:bandman-grunewald-kunyavskii2010}), we can reduce the dynamical system $\mathcal{D}$ modulo $p$, to obtain a \textit{residual dynamical system} $\mathcal{D}_p=(X_p, f_p)$ defined over the residue field $\mathbb{F}_p$. We can then compare the dynamics of the ``global" system $\mathcal{D}$, and the dynamics of all (but finitely many of) the ``local" systems $\mathcal{D}_p$.

A periodic point $P\in X(K)$ of period $n$ guarantees the existence of periodic points of period at most $n$ in all but finitely many residual systems (In fact much more can be said, cf.\ Silverman~\cite[Theorem 2.21]{book:silverman2007} and Hutz~\cite{article:hutz2009}). The converse (in a similar vein to counterexamples to the Hasse principle, cf.\ Cohen~\cite[\S{5.7}]{book:cohen2007}) is false however, as can be seen from the relatively simple example of the dynamical system induced by the polynomial 
\begin{equation} \label{eq:bgk-srp}
f(x)=(x^2-2)(x^2-3)(x^2-6) + x
\end{equation}
on the affine line $\mathbb{A}^1_\mathbb{Q}$; this system has no $\mathbb{Q}$-periodic points, but has a fixed point modulo every prime $p\in\mathbb{Z}$ (cf.\ Bandman, Grunewald and Kunyavski{\u\i}~\cite{article:bandman-grunewald-kunyavskii2010} for details). 

We give a definition generalizing the property shown in the last example: 
\begin{defn} \label{def:srp}
Let $\mathcal{D}=(X, f)$ be an arithmetic-geometric dynamical system defined over a number field $K$ and let $\mathcal{F}$ be an algebraic subset of $X$ also defined over $K$ (we call $\mathcal{F}$ the \textit{forbidden set}). We say that $\mathcal{D}$ is \textit{strongly residually periodic} (with respect to $K$ and $\mathcal{F}$) if there exist a positive integer $M$ and a finite set of primes $\mathcal{S}$ in $K$, such that for any prime $p\notin\mathcal{S}$ the residual system $\mathcal{D}_p$ has a $\mathbb{F}_p$-periodic point $P$ of period at most $M$ and such that $P$ is not in the reduction modulo $p$ of the forbidden set $\mathcal{F}$ (i.e.\ for all but finitely many primes, there exist points of universally bounded period in the residual systems lying away from the reductions of the forbidden set).
\end{defn}

If $\mathcal{D}$ is strongly residually periodic with a bound $M$, we denote for short that $\mathcal{D}$ is $SRP(M)$. We also modify the definition of a dynamical system by adding a third component: $\mathcal{D}=(X, f, \mathcal{F})$ where $X$ is the variety, $f$ is the endomorphism and $\mathcal{F}$ is the chosen forbidden set. The companion residual systems are likewise modified: $\mathcal{D}_p=(X_p, f_p, \mathcal{F}_p)$. The introduction of the forbidden set allows us for example to consider the dynamical system $\mathcal{D}=(\mathbb{A}^1_\mathbb{Q}, f(x)=x^2, \mathcal{F}=\{0,1\})$ as being \emph{not} strongly residually periodic (over $\mathbb{Q}$), even though $0$ and $1$ are fixed modulo every prime $p$ in $\mathbb{Q}$ (proving that the system $\mathcal{D}$ above is not strongly residually periodic is similar to Example 6.6 in Bandman, Grunewald and Kunyavski{\u\i}~\cite{article:bandman-grunewald-kunyavskii2010}). 

\textit{Strong residual periodicity} was first introduced by Bandman, \\ Grunewald and Kunyavski{\u\i}~\cite{article:bandman-grunewald-kunyavskii2010}, and results on residual periodicity on smooth cubic surfaces were published by the author of the present article~\cite{article:wishcow2014}. A related study by Silverman~\cite{article:silverman2008}, generalized by Akbary and Ghioca~\cite{article:akbary-ghioca2009}, proved that the orbit lengths of reductions of a non-periodic point are not universally bounded (in the sense of Definition \ref{def:srp}), implying that strong residual periodicity cannot be explained by a finite set of rational points in $X(K)$ (In fact these papers provide an asymptotic lower bound for the growth of the residual orbit lengths with respect to the primes).

We now present the main results of this article for the automorphisms $\phi_i\phi_j$ (see \eqref{eq:reflections}) of the Markoff surface $M$ (see \eqref{eq:markoff-with-1}):

\begin{thm} \label{thm:main-thmA}
The dynamical system $\mathcal{D}_1 = (M, \phi_i\phi_j, \mathcal{F}_1=\{(0,0,0)\})$, where $i,j\in\{1,2,3\}$ are distinct, is SRP(3) over $\mathbb{Q}$ and the universal bound of $3$ is sharp.
\end{thm}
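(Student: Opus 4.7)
The plan is to exploit the fact that $\phi_i\phi_j$ preserves the coordinate whose index is neither $i$ nor $j$, since each reflection $\phi_k$ alone fixes the two coordinates other than $x_k$. This foliates $M$ into an invariant one-parameter family of plane-conic slices and reduces the three-dimensional dynamical problem to a family of affine linear dynamics on conics. By the evident $S_3$-symmetry of the Markoff equation in $(x,y,z)$, the three pairs $(i,j)$ are interchanged by relabeling coordinates, so it suffices to treat $(i,j)=(1,2)$; the preserved coordinate is then $z$.

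A direct computation gives
$$
\phi_1\phi_2(x,y,z) \;=\; \bigl((z^2-1)x - zy,\; zx - y,\; z\bigr),
$$
so on each slice $C_c := M \cap \{z=c\}$ the map restricts to the linear automorphism of $\mathbb{A}^2$ with matrix
$$
A_c \;=\; \begin{pmatrix} c^2-1 & -c \\ c & -1 \end{pmatrix} \in \mathrm{SL}_2,
$$
whose characteristic polynomial is $\lambda^2-(c^2-2)\lambda+1$. The slice itself is the affine conic $x^2-cxy+y^2+c^2 = 0$ in the $(x,y)$-plane.

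To establish the upper bound $SRP(3)$, I would specialize to $c=1$: the matrix $A_1$ has characteristic polynomial $\lambda^2+\lambda+1$, hence $A_1^3 = I$ in every characteristic (a direct check at the small primes being routine). The slice $C_1 : x^2-xy+y^2+1 = 0$ has no real (hence no rational) points, since as a quadratic in $x$ its discriminant is $-3y^2-4 < 0$; yet for every prime $p \neq 2,3$ it is a smooth plane conic over $\mathbb{F}_p$ whose projective closure is isomorphic to $\mathbb{P}^1_{\mathbb{F}_p}$, and subtracting the at-most-two points at infinity yields $|C_1(\mathbb{F}_p)| \geq p-1 \geq 4$. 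Any such point $(x_0,y_0)$ lifts to $(x_0,y_0,1) \in M(\mathbb{F}_p) \setminus \{(0,0,0)\}$ and has $\phi_1\phi_2$-orbit of length dividing $3$, proving $SRP(3)$.

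For the sharpness of the bound, I would show that for infinitely many primes no non-forbidden periodic point of period at most $2$ exists. A periodic point of period $\le 2$ on $C_c$ must lie in $\ker(A_c^2 - I)$, and factoring gives $\det(A_c^2 - I) = \det(A_c-I)\det(A_c+I) = (4-c^2)c^2$, so only $c\in\{0,\pm 2\}$ are candidates. On the slices $C_{\pm 2}$, the fixed line $\ker(A_c-I) = \{x=\pm y\}$ is geometrically disjoint from the conic equation $(x\mp y)^2 = -4$ in any characteristic $\neq 2$, so these slices contribute no period-$\le 2$ point. The remaining slice $C_0 : x^2+y^2=0$, $z=0$, contains only the origin when $-1$ is not a square in $\mathbb{F}_p$. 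By Dirichlet there are infinitely many primes $p \equiv 3 \pmod 4$, and for each such $p$ the only periodic point of period $\le 2$ is the forbidden $(0,0,0)$; hence $SRP(M)$ fails for every $M<3$. I do not anticipate any significant obstacle beyond careful bookkeeping of the finite exceptional set $\mathcal{S}$ arising from $p=2,3$.
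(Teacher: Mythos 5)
Your proof is correct and follows essentially the same strategy as the paper: reduce to $\phi_1\phi_2$ by the $S_3$-symmetry, exploit the $z$-preserving conic fibration so that the dynamics on each fiber is the linear action of $A_c \in \mathrm{SL}_2$, use the $z=1$ fiber (which has no $\mathbb{Q}$-points but $\mathbb{F}_p$-points for all large $p$) to obtain the $SRP(3)$ bound, and for sharpness identify $c\in\{0,\pm 2\}$ as the only fibers that could carry period-$\le 2$ points and apply Dirichlet modulo $4$. The only cosmetic difference is that you locate the relevant fibers via the factorization $\det(A_c^2-I)=(4-c^2)c^2$, whereas the paper classifies them through the table of quadratic cyclotomic polynomials and the Jordan form of $A_\lambda$; the underlying computation is the same.
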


\begin{thm} \label{thm:main-thmB}
The dynamical system $\mathcal{D}_2 = (M, \phi_1\phi_2, \mathcal{F}_2)$, where the forbidden set
$$\mathcal{F}_2 = \left\{M\cap \{(z^2-1)=0\}\right\} \cup \{(0,0,0)\}$$ 
is the union of $2$ conic sections on the surface, is \emph{not} strongly residually periodic over $\mathbb{Q}$.
\end{thm}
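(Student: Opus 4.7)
The starting point is the fibration $M\to\mathbb{A}^1$, $(x,y,z)\mapsto z$: since both $\phi_1$ and $\phi_2$ fix the last coordinate, $\phi_1\phi_2$ preserves every fibre $M_c := M\cap\{z=c\}$ and acts on it by the linear transformation
\[
A_c = \begin{pmatrix} c^2-1 & -c \\ c & -1 \end{pmatrix}\in \mathrm{SL}_2,
\]
whose characteristic polynomial is $\lambda^2-(c^2-2)\lambda+1$. Hence $A_c$ has finite order $n$ if and only if its eigenvalues are primitive $n$-th roots of unity, equivalently $c^2-2=\zeta+\zeta^{-1}$ for some primitive $n$-th root of unity $\zeta$; in that case every point of the smooth conic $M_c$ is $\phi_1\phi_2$-periodic of exact period $n$.

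My plan is to enumerate the ``special'' values of $c$ which could produce periodic points of small period outside $\mathcal{F}_2$, and then use Dirichlet's theorem to exhibit infinitely many primes at which none of them is realisable in $\mathbb{F}_p$. The catalogue is: $c=\pm 1$ yields the order-$3$ fibres that make up the conic part of $\mathcal{F}_2$; $c=0$ gives $A_0=-I$ of order $2$, with $M_0:x^2+y^2=0$ carrying non-trivial $\mathbb{F}_p$-points exactly when $p\equiv 1\pmod 4$; $c=\pm 2$ gives a unipotent $A_c$, and a direct computation shows that the unique $A_c^n$-fixed line (for $n$ coprime to $p$) misses $M_{\pm 2}$ entirely, so this case never contributes; and for each $n\geq 4$, the remaining solutions of $c^2-2=\zeta_n+\zeta_n^{-1}$ take the form $c=\mu+\mu^{-1}$ with $\mu$ a primitive $2n$-th root of unity, where a short Frobenius computation (the identity $(\mu+\mu^{-1})^p=\mu+\mu^{-1}$ in characteristic $p$ forces $\mu^p\in\{\mu,\mu^{-1}\}$) shows that such a $c$ lies in $\mathbb{F}_p$ precisely when $p\equiv\pm 1\pmod{2n}$.

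Given any candidate bound $M$, the existence of an $\mathbb{F}_p$-periodic point of period at most $M$ outside $\mathcal{F}_2$ then forces either $p\equiv 1\pmod 4$ or $p\equiv\pm 1\pmod{2n}$ for some $n\in\{4,\ldots,M\}$. To refute SRP I would thus produce, for every $M$, infinitely many primes $p$ satisfying
\[
p\equiv 3\pmod 4 \quad\text{and}\quad p\not\equiv\pm 1\pmod{2n}\text{ for each }n\in\{4,\ldots,M\}.
\]
These are finitely many congruence conditions modulo $N := 2\,\mathrm{lcm}(4,\ldots,M)$; each excludes only two residue classes out of the $\phi(2n)$ coprime ones mod $2n$, so the combined system admits non-empty residue classes, and Dirichlet's theorem on primes in arithmetic progressions supplies infinitely many primes in each of them. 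I expect the most delicate step to be the fibre-by-fibre verification that $c\in\{0,\pm 2\}$ contributes no admissible periodic points; once that geometric fact is pinned down, the Frobenius-plus-Dirichlet machinery is entirely routine.
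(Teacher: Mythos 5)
Your overall strategy is sound and closely parallels the paper's: fibre the surface by the $z$-coordinate, reduce to the order of the $2\times 2$ matrix $A_c$, classify the fibres that can carry low-period points by the roots of the characteristic polynomial, translate ``$c\in\mathbb{F}_p$'' into congruence conditions on $p$ via Frobenius, and finish with Dirichlet. Your identification of the exceptional fibres $c\in\{0,\pm 1,\pm 2\}$ and the computation that the unipotent case $c=\pm 2$ contributes nothing are exactly what the paper does; and your claim that $c=\mu+\mu^{-1}\in\mathbb{F}_p$ iff $p\equiv\pm 1\pmod{2n}$ is correct (for odd $n$ the correct modulus is $n$, but for odd primes $p$ and odd $n$ the two congruences $p\equiv\pm 1\pmod n$ and $p\equiv\pm 1\pmod{2n}$ are equivalent, so no harm is done).

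The one genuine difference is structural. The paper first proves a cleaner divisibility statement (Lemma~\ref{lem:one}: any residual period divides $p^2-1$), which lets it work with a single fixed residue class modulo $24$ and measure progress by $\omega(p)$, the least prime factor of $(p^2-1)/24$; the congruences in Lemma~\ref{lem:three} are then a concrete CRT system whose solvability is manifest. You instead fix a target bound $M$ and assemble a fresh list of congruence conditions for each $M$. That is a valid route, but the point you hand-wave --- ``each excludes only two residue classes out of the $\phi(2n)$ coprime ones mod $2n$, so the combined system admits non-empty residue classes'' --- is not a proof: the excluded classes for different $n$, once lifted to a common modulus, overlap in ways that a per-modulus count does not control, and naively summing $2/\phi(2n)$ does not stay below~$1$. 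The nonemptiness is true, but you have to exhibit it; for instance take $p\equiv 3\pmod 8$, $p\equiv 4\pmod 9$, and $p\equiv 2\pmod q$ for each odd prime $5\le q\le 2M$, and check that this rules out $p\equiv\pm 1\pmod{2n}$ for every $4\le n\le M$. This CRT-plus-Dirichlet verification, rather than the geometry of the fibres $c\in\{0,\pm 2\}$, is in fact the step that needs the most care, and it is precisely what the paper supplies in the proof of Lemma~\ref{lem:three}.
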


%




\begin{rem*}
Theorem~\ref{thm:main-thmB} can be generalized to any $\phi_i\phi_j$, but we state it only for $\phi_1\phi_2$ for clarity.
\end{rem*}

Theorem~\ref{thm:main-thmB} is proved by showing that for a specific increasing sequence of primes, the minimal periods of the residual systems $\mathcal{D}_{2,p}$ (i.e.\ the minimum of exact periods of $\mathbb{F}_p$-periodic points in $\mathcal{D}_{2,p}$) are unbounded over the primes (see Lemmas \ref{lem:two} and \ref{lem:three} in \S{\ref{sec:local}} below). It should be noted that the conics defined by the hyperplane sections $z\pm{1}$ are fixed under the third iteration of $\phi_1\phi_2$, but have no $\mathbb{Q}$-rational points. For a dynamical system $\mathcal{D}=(X, f)$ we will call a subvariety of ${X}$ \emph{strongly periodic} if it is fixed under some iteration of the endomorphism $f$ (this terminology is not standard).

Now, the results of this article, together with the examples in \cite[\S{9}]{article:wishcow2014}, may make one suspect that strong residual periodicity of dynamical systems on \emph{surfaces} serves merely as an indicator for the existence of absolutely irreducible strongly periodic curves, defined over the base field $K$, having only finitely many $K$-rational points. As a counterexample, one may take a trivial generalization of example \eqref{eq:bgk-srp} to the affine plane by fixing a second parameter. A less trivial generalization of example \eqref{eq:bgk-srp} was suggested to the author by St\'ephane Lamy:
\begin{equation}\label{eq:lamy}
f(x,y) = (y,(y^{2}-2)(y^{2}-3)(y^{2}-6)+x).
\end{equation} 
The map $f$ is a H\'enon automorphism of the affine plane, and as such has no periodic curves, i.e.\ curves which are invariant under some iteration of $f$ (cf.\ Bedford and Smillie~\cite{article:bedford-smillie1991}). However, one can check that the automorphism $f$ has no $\mathbb{Q}$-periodic points and has a fixed point modulo every prime integer $p$, i.e.\ it is strongly residually periodic: The first fact can be shown for example by using bounds on valuations as in Ingram~\cite{article:ingram2011}. The second fact reduces to example \eqref{eq:bgk-srp}  by looking at the diagonal $x=y$. 


\section{Preliminaries}

Let $\mathcal{D}=(X, f, \mathcal{F})$ be a dynamical system as in $\S\ref{subsec:residual-periodicity}$. Let $\sigma\in PGL_{n+1}(K)$ be a projective automorphism that restricts to an automorphism of $X$,  then the \emph{linear conjugation} of $f$ by $\sigma$ is $f^\sigma = \sigma^{-1}\circ{f}\circ\sigma$. The dynamical system $\mathcal{D}^\sigma = (X, f^\sigma, \sigma^{-1}\left(\mathcal{F}\right))$ has the same dynamics over $K$ as $\mathcal{D}$ (e.g.\ if $P\in{X(K)}$ is a periodic point of exact period $n$ of $f$, then $\sigma^{-1}(P)$ is a periodic point of exact period $n$ of $f^\sigma$). Moreover, $\mathcal{D}$ is strongly residually periodic if and only if $\mathcal{D}^\sigma$ is strongly residually periodic, as their dynamical behavior may differ at only finitely many primes.

It is not hard to check that all the automorphisms of the Markoff surface of type $\phi_i\phi_j$, for distinct $ i,j\in\{1,2,3\}$, are linearly conjugate by permuting the variables $x,y$ and $z$. Therefore it is enough to check strong residual periodicity only for $\phi_1\phi_2$. 

\section{The global picture} \label{sec:global}

As mentioned in the introduction, it is already known by Cantat and Loray~\cite[\S{5.1}]{article:cantat-loray2009} that the automorphism $\phi_1\phi_2$ has no periodic points defined over $\mathbb{R}$, and therefore over $\mathbb{Q}$, other than $(0,0,0)$. More specifically, they proved that a point $(x,y,z)$, defined over $\mathbb{C}$, is periodic if and only it is fixed (i.e.\ the point $(0,0,0)$) or if $z=\pm{2cos(\pi\theta)}\neq{\pm{2}}$, where $\theta$ is a rational number (compare with table \ref{tab:periodic_fibers} below). However, we carefully study the dynamics over $\mathbb{Q}$, as it will help us in the local analysis as well.

The automorphism $\phi_1\phi_2$ fixes the third parameter $z$, and the projection $\pi_3 : (x,y,z)\mapsto z$ induces a conic fibration on $M$, which is \emph{preserved} by $\phi_1\phi_2$; i.e.\ each fiber is invariant under $\phi_1\phi_2$. The action of the automorphism on each fiber is linear. 

Fixing a fiber $z=\lambda$, we get 
\begin{equation}
\phi_1\phi_2(x,y,\lambda) =
\begin{bmatrix}
-1 & \lambda \\
0 & 1
\end{bmatrix}
\begin{bmatrix}
1 & 0 \\
\lambda & -1
\end{bmatrix} 
\begin{bmatrix}
x\\y
\end{bmatrix}
=
\begin{bmatrix}
-1+\lambda^2 & -\lambda \\
\lambda & -1
\end{bmatrix}
\begin{bmatrix}
x\\y
\end{bmatrix}.
\end{equation}

We denote
\begin{equation}
A_\lambda = \begin{bmatrix}
-1+\lambda^2 & -\lambda \\
\lambda & -1
\end{bmatrix}.
\end{equation}

A point $(x,y)\neq{(0,0)}$ on the fiber over $z=\lambda$ is a periodic point of $A_\lambda$ if and only if $\det(A_\lambda^n-I) = 0$ for some positive integer $n$. We notice that the determinant of $A_\lambda$ is $1$, so that $\det(A_\lambda^n)=1$. Suppose that $UAU^{-1}$ is the Jordan form of $A$, then $\det(A_\lambda^n-I) = 0$ implies $\det((UA_{\lambda}U^{-1})^n-I)=0$. This in turn implies that one of the eigenvalues of $A_\lambda$ is a root of unity or zero, and since the determinant of $A_\lambda$ is $1$ we get that both eigenvalues must be roots of unity. Thus the possible Jordan forms for $A_\lambda$ are:
\begin{equation} \label{eq:jordan}
\begin{bmatrix} 
\mu & 1 \\
0 & \mu
\end{bmatrix}, 
\quad 
\begin{bmatrix}
\mu & 0\\
0 & \mu^{-1}
\end{bmatrix}
\end{equation}
where $\mu$ is a root of unity and $\mu^{-1} = \overline{\mu}$, the complex conjugate of $\mu$. Such an $A_\lambda$ has infinite order if and only if its Jordan form is of the first type in \eqref{eq:jordan}, and since $\det(A_\lambda)=1$ this implies $\mu=\pm{1}$.

The characteristic polynomial of $A_\lambda$ for any $\lambda$ is
\begin{equation}
f_\lambda(t) = \det(tI-A_\lambda) = t^2+(2-\lambda^2)t+1 = t^2+dt+1
\end{equation}
where $d=2-\lambda^2=-Tr(A_\lambda)$.
If $\lambda$ is rational then so is $d$, so that the minimal polynomial of a root $\mu$ of $f_\lambda(t)$ must divide $f_\lambda(t)$. Since $\mu$ is a root of unity, the minimal polynomial is a cyclotomic polynomial of degree at most $2$, and there are only finitely many of those. We summarize them in the following table (here $\rho_n$ is a chosen primitive $n$-th root of unity in $\overline{\mathbb{Q}}$):
\begin{table}[h]
\begin{center}
\caption {Fibers where the roots of $f_\lambda$ are roots of unity and $d$ is rational} \label{tab:periodic_fibers} 
\begin{tabular}{| >{$}l<{$} | >{$}l<{$} | >{$}l<{$} | >{$}l<{$} | >{$}l<{$} |}
\hline
\mu & f_\lambda & d & \lambda  \\ \hline
1 & t^2-2t+1 & -2 & \pm{2} \\ 
-1 & t^2+2t+1 & 2 & 0\\ 
\rho_3 & t^2+t+1 & 1 & \pm{1}\\ 
\rho_4 & t^2+1 & 0 & \pm\sqrt{2}\\
\rho_6 & t^2-t+1 & -1 & \pm\sqrt{3} \\ \hline
\end{tabular} 
\end{center}
\end{table}

We see that for the cases of $\rho_4$ and $\rho_6$ the fibers over $\lambda$ are not defined over $\mathbb{Q}$, so they cannot contribute rational periodic points. 

If $\mu=1$ then $\lambda=\pm{2}$, and
$A_\lambda = 
\begin{bmatrix}
3 & \mp{2} \\
\pm{2} & -1
\end{bmatrix}$ with Jordan form 
$\begin{bmatrix}
1 & 1 \\
0 & 1
\end{bmatrix}$. Such an $A_\lambda$ can only have periodic points that are fixed, and these can be checked to have to be of the form $(x,x)$ for $\lambda=2$, and $(x,-x)$ for $\lambda=-2$. In any case, the fibers over $\lambda=\pm{2}$
are
$(x\mp{y})^2+{4} = 0,$
and these are reducible over $\mathbb{Q}(i)$ and have no $\mathbb{Q}$-rational points. 

For $\mu=-1$ we get $A_\lambda = -I$, but the equation of the conic $x^2+y^2 = 0$ is reducible over $\mathbb{Q}(i)$ and the only $\mathbb{Q}$-rational point is $(0,0,0)$, which is fixed.

Finally, for the case $\mu = \rho_3$, the two fibers over $\lambda = \pm{1}$ are 
\begin{equation}
x^2+y^2\mp{xy}+1 = 0.
\end{equation}
These curves are geometrically irreducible and contain no rational points.

By the computations above we can conclude that the system $\mathcal{D}_1$ (see Theorem~\ref{thm:main-thmA}) is SRP(3): First, the fibers over $\lambda = \pm{1}$ contain rational points over $\mathbb{F}_p$ for any prime $p\neq{3}$. In fact, by using for instance Chevalley's theorem (cf.\ Ireland and Rosen~\cite[\S{10.2}]{book:ireland-rosen1990}), we know that for any prime $p$ there exists a $\mathbb{F}_p$-rational point in the projective closure of the plane curves $x^2+y^2\pm{}xy+1$ (the point might be at infinity); since the curves are smooth for $p\neq{3}$ they are rational, and we get that the curves should contain $p+1$ rational points, at least one of which is not at infinity (for the case of $p=3$ one can check that there are no solutions not at infinity). Second, the automorphism $\phi_1\phi_2$ is of order $3$ on these fibers as $A_\lambda=\begin{bmatrix}
\rho_3 & 0\\
0 & \rho_3^{-1}
\end{bmatrix}$ is of order $3$; therefore the $\mathbb{F}_p$-rational points on the fibers will be periodic of period dividing $3$; note that since the only fixed point on the surface is $(0,0,0)$, the $\mathbb{F}_p$-rational points on the fibers will be periodic of exact period $3$. This proves the first part of Theorem \ref{thm:main-thmA}. Note that we have not yet ruled out the existence of periodic points of period 1 and 2 modulo all but finitely many primes, this will be done in the next section.

\section{The local picture} \label{sec:local}

Taking the determinant commutes with reducing modulo $p$ (since the determinant is a polynomial in the entries of the matrix), so that $A_\lambda$ has determinant $1$ modulo every prime $p$. The same reasoning as in the previous section implies that we are interested in pairs $(t,d)\in\overline{\mathbb{F}_p}^*\times\mathbb{F}_p$, where $\overline{\mathbb{F}_p}^*$ is the group of invertible (nonzero) elements in the algebraic closure of $\mathbb{F}_p$, satisfying 
\begin{equation} \label{eq:local-equation}
 t^2+dt+1 = 0.
\end{equation}
Let $(t,d)\in\overline{\mathbb{F}_p}^*\times\mathbb{F}_p$ be a solution to \eqref{eq:local-equation}, and suppose that $t$ has order $n$ in $\overline{\mathbb{F}_p}^*$. Then $t$ is a primitive $n$-th root of unity in $\overline{\mathbb{F}_p}$, so it is also a root of the $n$-th cyclotomic polynomial $\Phi_n$ (the $n$-th cyclotomic polynomial $\Phi_n$ is the minimal polynomial of the primitive $n$-th roots of unity over $\mathbb{Q}$). We can assume that $\gcd(p,n)=1$, since otherwise there are no primitive $n$-th roots in $\overline{\mathbb{F}_p}$.

We recall the following fact from the algebra of finite fields (see Lidl and Niederreiter~\cite[Theorem 2.47]{book:lidl-niederreiter1986}): suppose $\gcd(p,n)=1$, and let $k$ be the order of $p$ in the multiplicative group of invertible elements in the ring $\mathbb{Z}/n\mathbb{Z}$; then $\Phi_n$ factors over $\mathbb{F}_p$ into $\frac{\phi(n)}{k}$ irreducible polynomials of degree $k$ (where $\phi$ is Euler's totient). 

We are interested in the minimal $n$ such that $t^2+dt+1$ and $\Phi_n$ have a common factor in the ring $\mathbb{F}_p[t]$; suppose that this is so: if $t^2+dt+1$ has distinct roots then $t^2+dt+1$ is a factor of $\Phi_n$, otherwise it has a multiple roots that must be $\pm{1}$. In the latter case we get either $f_\lambda(t) = (t-1)^2=\Phi_1(t)^2$ or $f_\lambda(t) = (t+1)^2 = \Phi_2(t)^2$. By the fact mentioned in the previous paragraph, $k$ (the order of $p$ modulo $n$) must be either $1$ or $2$, so that $p^2\equiv{1}\pmod{n}$, or in other words, $n$ is a divisor of $p^2-1$. We have thus proved the following lemma:

\begin{lem} \label{lem:one}
If the residual system $\mathcal{D}_{2,p}$ (see Theorem \ref{thm:main-thmB}) has a periodic point of exact period $n$, then $n$ divides $p^2-1$.
\end{lem}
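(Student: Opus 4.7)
The plan is to transfer the fiberwise linear analysis of Section~\ref{sec:global} to the residual setting: $\phi_1\phi_2$ still acts on the fiber $z=\lambda$ of the reduced surface as the matrix $A_\lambda$ with $\det A_\lambda = 1$ and characteristic polynomial $f_\lambda(t) = t^2 + dt + 1$, $d = 2 - \lambda^2 \in \mathbb{F}_p$. A periodic point of exact period $n$ forces an eigenvalue $t \in \overline{\mathbb{F}_p}^{\ast}$ of $A_\lambda$ to be a primitive $n$-th root of unity, so the goal becomes bounding the multiplicative order of such an eigenvalue.

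First, I would observe that $\gcd(n,p)=1$, because in characteristic $p$ there are no nontrivial $p$-th roots of unity, so if $p \mid n$ then no primitive $n$-th root of unity exists in $\overline{\mathbb{F}_p}^{\ast}$. Consequently $t$ is a root of the cyclotomic polynomial $\Phi_n$, so $\Phi_n$ and $f_\lambda$ share a common factor in $\mathbb{F}_p[t]$. The crux is then to invoke the classical factorization theorem (Lidl--Niederreiter, Theorem~2.47): when $\gcd(n,p)=1$, $\Phi_n$ splits over $\mathbb{F}_p$ into $\phi(n)/k$ distinct irreducible factors, each of degree $k$, where $k$ is the multiplicative order of $p$ modulo $n$.

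A case split on the separability of $f_\lambda$ then finishes the argument. In the separable case, the two eigenvalues of $A_\lambda$ are $t$ and $t^{-1}$ (using $\det A_\lambda = 1$), both primitive $n$-th roots of unity, so $f_\lambda \mid \Phi_n$ in $\mathbb{F}_p[t]$. Comparing degrees with the factorization above forces $k \in \{1,2\}$, i.e.\ $p^2 \equiv 1 \pmod{n}$. In the non-separable case, the double root of $f_\lambda$ must be $\pm 1$ (forced by the product of roots being $1$), so $f_\lambda = \Phi_1^2$ or $\Phi_2^2$, giving $n \in \{1,2\}$, which trivially divides $p^2-1$.

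The main technical ingredient is the factorization theorem for cyclotomic polynomials over $\mathbb{F}_p$; everything else is a direct transcription of the Jordan-form analysis of Section~\ref{sec:global} into characteristic $p$. The subtlety worth highlighting is that, in the separable case, the exact period of a point genuinely equals the order of the relevant eigenvalue because $A_\lambda$ is then diagonalizable over $\overline{\mathbb{F}_p}$, while the non-separable case collapses directly to $n \in \{1,2\}$, so no further analysis of the matrix's order is needed beyond that provided by the characteristic polynomial.
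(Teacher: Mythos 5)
Your proof follows the same architecture as the paper's: pass to the eigenvalue $t$ of $A_\lambda$, note that $t$ satisfies $t^2+dt+1=0$ with $d\in\mathbb{F}_p$, invoke the Lidl--Niederreiter factorization of $\Phi_n$ over $\mathbb{F}_p$, and split into the separable and non-separable cases. In that sense it is essentially the same proof.

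However, there is a shared gap (present in the paper as well, but stated more explicitly in your write-up) in the non-separable case. You claim that when $f_\lambda$ has a double root $\pm 1$, ``no further analysis of the matrix's order is needed beyond that provided by the characteristic polynomial,'' concluding $n\in\{1,2\}$. This is not right: in characteristic $p$ the matrix $A_\lambda$ is then (for $p>2$) a non-diagonal Jordan block $\left[\begin{smallmatrix}\pm 1 & 1\\ 0 & \pm 1\end{smallmatrix}\right]$, whose order in $GL_2(\mathbb{F}_p)$ is $p$ (resp.\ $2p$), not $1$ (resp.\ $2$). A point whose component off the eigenline is nonzero therefore has exact period $p$ or $2p$, and these do \emph{not} divide $p^2-1$. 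Concretely, for $p=5$ the point $(1,0,2)$ lies on $M$ modulo $5$, is outside $\mathcal{F}_2$, sits on the fiber $\lambda=2$ (where $A_2$ has characteristic polynomial $(t-1)^2$), and has exact period $5$ under $\phi_1\phi_2$; yet $5\nmid 24=5^2-1$. So the chain ``exact period of the point $=$ order of the eigenvalue $t$'' fails precisely in the non-separable case, and the lemma as literally stated is false for such $p$. This does not break the paper's downstream use of the lemma, since Lemma~\ref{lem:two} restricts to primes $p\equiv 3\pmod 4$, for which $-1$ is a non-residue and the fibers $\lambda\in\{0,\pm 2\}$ carry no $\mathbb{F}_p$-points other than $(0,0,0)$, but your write-up (like the paper's) should either exclude the non-separable fibers from the statement or handle their period-$p$ (and $2p$) points separately.
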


It is easy to check that for any prime $p\geq{5}$ we have $24|(p^2-1)$. We denote by $\omega(p)$ the smallest prime factor of $\frac{p^2-1}{24}$ for a prime $p\geq{5}$.

\begin{lem} \label{lem:two}
If $p\geq{5}$ is a prime of the form $p=24\ell+19$, then any periodic point of the residual system $\mathcal{D}_{2,p}$ lying outside of the forbidden set $\mathcal{F}_{2,p}$ has period greater or equal to $\omega(p)$.
\end{lem}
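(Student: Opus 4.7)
The plan is a finite case analysis refining Lemma~\ref{lem:one}. Suppose $P \in M(\mathbb{F}_p) \setminus \mathcal{F}_{2,p}$ has exact period $n$; by Lemma~\ref{lem:one}, $n \mid p^2 - 1$, so if $n \geq \omega(p)$ we are done. Assume $n < \omega(p)$: by the definition of $\omega(p)$, every prime factor of $n$ must divide $24$, so $n = 2^a 3^b$. Continuing the matrix analysis of Section~\ref{sec:global}, the period of $P$ on its fiber $z=\lambda$ equals the order of an eigenvalue $\mu$ of $A_\lambda$; since $d = -(\mu+\mu^{-1}) \in \mathbb{F}_p$, a Galois argument shows the nontrivial Frobenius sends $\mu$ to $\mu^{-1}$, whence $n \mid p-1$ or $n \mid p+1$, while $\lambda \in \mathbb{F}_p$ must satisfy $\lambda^2 = 2 + \mu + \mu^{-1}$.

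Next, I would collect the arithmetic consequences of $p \equiv 19 \pmod{24}$. Because $p \equiv 3 \pmod 4$, $p \equiv 3 \pmod 8$, and $p \equiv 7 \pmod{12}$, the integers $-1$, $2$, and $3$ are all quadratic non-residues modulo $p$ (the last by quadratic reciprocity). A direct valuation check yields $v_2(p-1) = 1$, $v_2(p+1) = 2$, and $v_3(p+1) = 0$, so the $2$-part of $p \pm 1$ is bounded by $4$ and the $3$-part of $p^2-1$ comes entirely from $p-1$.

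The case enumeration on $n$ is then immediate from the table in Section~\ref{sec:global}. For $n = 1$ the fixed points of $\phi_1\phi_2$ are $(0,0,0) \in \mathcal{F}_{2,p}$ or lie on $z = \pm 2$ satisfying $(x \mp y)^2 + 4 = 0$, which has no $\mathbb{F}_p$-points since $-1$ is a non-residue. For $n = 2$ either $\lambda = 0$, in which case $x^2 + y^2 = 0$ forces $(x,y) = (0,0)$, or $\lambda^2 = 2$, which fails as $2$ is a non-residue. For $n = 3$ the periodic fibers are $z = \pm 1$, both in $\mathcal{F}_{2,p}$. For $n = 4$ we need $\lambda^2 = 2$ and for $n = 6$ we need $\lambda^2 = 3$, both impossible. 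All larger $n = 2^a 3^b$ are killed by valuations: values $n \in \{8,12,16,24,\ldots\}$ fail $n \mid p \pm 1$ since the $2$-part is bounded by $4$ and the $3$-part of $p+1$ is trivial; while $n \in \{9,18,27,\ldots\}$ require $3 \mid \ell$, but then $9 \mid p-1$ forces $\omega(p) = 3$, placing such $n$ above, not below, $\omega(p)$. The main obstacle is the bookkeeping on this last dichotomy: when $3 \mid \ell$ the bound $\omega(p)$ collapses to $3$, so the hypothesis $n < \omega(p)$ reduces to $n \in \{1,2\}$, already excluded via the $-1$ non-residue alone.
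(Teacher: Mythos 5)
Your proof is correct and follows essentially the same strategy as the paper's: use Lemma~\ref{lem:one} and the structure of $p^2-1$ to reduce to a finite list of candidate exact periods, then eliminate each via quadratic-residue obstructions or by observing that the relevant fiber lies in the forbidden set. The paper accomplishes the reduction by asserting (without spelling out the valuation bookkeeping) that it suffices to rule out $n\mid 24$, and then checks $n\in\{1,2,3,4,6,8,12,24\}$ one by one via explicit factorizations of $\Phi_n$. Your variant is slightly slicker: you first isolate $n=2^a3^b$, then observe via the Frobenius action on an eigenvalue $\mu$ of $A_\lambda$ that the period divides $p-1$ or $p+1$, so that the cases $n\geq 8$ all die simultaneously from the valuations $v_2(p-1)=1$, $v_2(p+1)=2$, $v_3(p+1)=0$, rather than requiring separate inspection of $\Phi_8,\Phi_{12},\Phi_{24}$. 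You also make explicit the dichotomy $3\mid\ell$ versus $3\nmid\ell$ that the paper glosses over when asserting ``it is enough to prove ... period dividing $24$.''

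Two small issues to tidy up. First, in the $n=2$ case you list ``$\lambda^2=2$'' as a possibility, but by your own formula $\lambda^2 = 2 + \mu + \mu^{-1}$ with $\mu=-1$ the only option is $\lambda=0$; the value $\lambda^2=2$ belongs to $n=4$. This is harmless, since both branches are excluded, but it suggests a momentary conflation. Second, your claim that the period of $P$ ``equals the order of an eigenvalue $\mu$'' is not literally true when $A_\lambda$ has a nontrivial Jordan block (which forces $\mu=\pm 1$); a non-eigenvector then has period $p$ or $2p$. This does not break the argument, because such periods exceed $\omega(p)$ and the eigenvector periods are $1$ or $2$, which you handle, but it is worth a sentence acknowledging the non-semisimple case — the paper does so explicitly in its $n=1$ analysis (``if there are periodic points of period $m<p$, they must be fixed'').
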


\begin{proof}
By Lemma \ref{lem:one} it is enough to prove that $\mathcal{D}_{2,p}$ has no periodic points of period dividing $24$ for primes $p$ of the form $p=24\ell+19$, lying outside the forbidden set. We proceed to analyze all cases for positive integers $n$ that are factors of $24$, and check for which fibers $\lambda$ the polynomial $f_\lambda(t)$ has a common factor with the cyclotomic polynomial $\Phi_n(t)$.


For $n=1$, we must have $f_\lambda(t) = \Phi_1(t)^2 = t^2-2t+1$. Therefore $d=-2$ and $\lambda=\pm{2}$ (see \S\ref{sec:global}). The fibers over $\lambda=\pm{2}$ have $\mathbb{F}_p$-rational points only if $-1$ is a quadratic residue modulo $p$. The Jordan form of $A_\lambda$ (for $p>2$) is
\begin{equation}
\begin{bmatrix}
1 & 1 \\
0 & 1
\end{bmatrix}
\end{equation}
which is of infinite order over $\mathbb{Q}$, but of order $p$ over $\mathbb{F}_p$. One can see from the Jordan form that if there are periodic points of period $m<p$, they must be fixed of the form $(x,0)$. For $\lambda=\pm{2}$
\begin{equation}
A_\lambda = \begin{bmatrix}
3 & \mp{2} \\
\pm{2} & -1
\end{bmatrix},
\end{equation}
and one can check that there are no $\mathbb{F}_p$-fixed points over the relevant fibers.

For $n=2$, we must have $f_\lambda(t) = \Phi_2(t)^2 = t^2+2t+1$, implying that $\lambda=0$; the fiber over $\lambda=0$ is geometrically reducible, and other than the point $(0,0,0)$ contains $\mathbb{F}_p$-rational points only when $-1$ is a quadratic residue modulo $p$. This means that for primes of the form $q=4\ell+1$ there exist points of exact period $2$ for large enough primes $q$. However, primes of the form $p=24\ell+19$ are not of this form, so for these primes there are no periodic points of period $2$. 

For $n=3$, we have already seen in the previous section that the fibers over $\lambda=\pm{1}$ will contain points of exact period $3$ for large enough primes $p$. These fibers are in the (reduction of the) forbidden set of $\mathcal{D}_2$.

For $n=4$ and $n=6$, we have seen in the previous section that when $2$ and $3$ respectively are quadratic residues (for primes of the form $p=8\ell\pm{1}$ and $p=12\ell\pm{1}$, respectively), the relevant fibers will be defined over $\mathbb{F}_p$; it is easy to check that for large enough primes they are geometrically irreducible, and will contain $\mathbb{F}_p$-periodic points of exact period $4$ and $6$. However, for primes of the form $p=24\ell+19$, the numbers $2$ and $3$ are not quadratic residues.

For $n=8$, the only type of factor of $\Phi_8(t) = t^4+1$ that is of the form $t^2+dt+1$ is $t^2+\alpha{t}+1$, where $\alpha^2 = 2$. Since $\alpha$ must be in $\mathbb{F}_p$, we get this type of factor only for primes $p$ such that $2$ is a quadratic residue, that is primes of the form $p=8\ell\pm{1}$. We remark here that the existence of a factor of the form $t^2+dt+1$ does not guarantee $\mathbb{F}_p$-periodic points of period $8$, since the fibers may still not be defined over $\mathbb{F}_p$, so that the minimal periods may be larger.

For $n=12$, the cyclotomic polynomial $\Phi_{12}(t) = t^4-t^2+1$ has a factor of the form $t^2+dt+1$ for $d=\beta$ where $\beta^2 = 3$, but $3$ is a quadratic residue mod $p$ if and only if $p= 12\ell\pm{1}$.

For $n=24$, the polynomial $\Phi_{24}(t) = t^8-t^4+1$ has a factor of the required form if and only if $d=\gamma$ where $\gamma$ is a root of $x^4-4x^2+1$. But this polynomial has a root only for those primes $p$ where $2$ and $3$ are \emph{both} quadratic residues mod $p$ (the splitting field of $x^4-4x^2+1$ over $\mathbb{Q}$ is $\mathbb{Q}(\sqrt{2},\sqrt{3})$), which does not happen for primes $p=24\ell+19$.
\end{proof}

\begin{lem} \label{lem:three}
There exists an increasing sequence of primes $(p_k)_{k=1}^{\infty}$ such that $p_k$ is of the form $p_k=24\ell+19$ for any positive integer $k$, and $$\lim_{\substack{k\rightarrow\infty}}\omega(p_k)=\infty.$$
\end{lem}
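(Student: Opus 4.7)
The plan is to use Dirichlet's theorem on primes in arithmetic progressions, combined with the Chinese remainder theorem, to construct primes $p \equiv 19 \pmod{24}$ for which $\frac{p^2-1}{24}$ has no small prime divisors.

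The starting point is a translation of divisibility of $\frac{p^2-1}{24}$ by small primes into residue conditions on $p$. For any prime $q \geq 5$, since $\gcd(q,24)=1$, one has $q \mid \frac{p^2-1}{24}$ if and only if $p \equiv \pm 1 \pmod q$. For $q = 3$, the hypothesis $p \equiv 19 \pmod{24}$ already forces $p \equiv 1 \pmod 3$; divisibility by $3$ of $\frac{p^2-1}{24}$ is then equivalent to $9 \mid p^2-1$, i.e.\ $p \equiv \pm 1 \pmod 9$. For $q=2$, the congruence $p \equiv 3 \pmod 8$ (coming from $p \equiv 19 \pmod{24}$) gives $v_2(p^2-1)=3$ exactly, so $\frac{p^2-1}{24}$ is automatically odd and $2$ never divides it. Hence, to force $\omega(p) > N$, it suffices to find a prime $p \equiv 19 \pmod{24}$ that avoids $\pm 1$ modulo $9$ and modulo each prime $q$ with $5 \leq q \leq N$.

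Given $N$, I set $M_N = 72 \cdot \prod_{5 \leq q \leq N,\, q\text{ prime}} q$, and look for a residue class $a \pmod{M_N}$ satisfying: $a \equiv 19 \pmod{24}$; $a \in \{4, 7\} \pmod 9$ (the residues $\equiv 1 \pmod 3$ that avoid $\pm 1 \pmod 9$); and $a \not\equiv \pm 1 \pmod q$ for each prime $5 \leq q \leq N$. These conditions are mutually compatible: the first refines to $a \equiv 3 \pmod 8$, and for each prime $q \geq 5$ the forbidden residues $\{0,1,-1\}$ leave $q-3 \geq 2$ admissible residues, all coprime to $q$. By the Chinese remainder theorem such an $a$ exists with $\gcd(a, M_N) = 1$, so Dirichlet's theorem produces infinitely many primes in the arithmetic progression $a \pmod{M_N}$.

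To finish, I construct the sequence inductively: for each $k$, I pick a prime $p_k$ from the arithmetic progression associated to $N = k$, chosen large enough that $p_k > \max(p_{k-1}, k)$. The avoidance of residues $\pm 1$ then translates, via the first paragraph, into the statement that no prime $q \leq k$ divides $\frac{p_k^2-1}{24}$, so $\omega(p_k) > k$; hence $\omega(p_k) \to \infty$, and the sequence is increasing by construction. No step is really an obstacle; the only minor care required is the separate handling of the primes $2$ and $3$ which are partly absorbed by the factor $24$ (namely $q=2$ needs no extra condition, and $q=3$ must be lifted to a condition modulo $9$ rather than modulo $3$).
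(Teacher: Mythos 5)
Your proof is correct and rests on the same two ingredients the paper uses --- the Chinese remainder theorem followed by Dirichlet's theorem --- so at the level of strategy the two approaches coincide. The paper fixes explicit residues (taking $a \equiv 5 \pmod{72}$ and $a \equiv 2 \pmod q$ for each prime $5 \le q \le k$) and then verifies directly that $\frac{p_k^2-1}{24}$ is coprime to every prime $\le k$, whereas you first characterize, for each small prime $q$, precisely which residue classes of $p$ would force $q \mid \frac{p^2-1}{24}$, and then observe that enough admissible classes remain to invoke CRT. Your handling of $q=2$ and $q=3$ is in fact the more careful of the two: the paper's choice $a \equiv 5 \pmod{72}$ produces primes $\equiv 5 \pmod{24}$, not $\equiv 19 \pmod{24}$ as the lemma asserts (and as the application in Lemma~\ref{lem:two} requires, since $5 \equiv 1 \pmod 4$ would make $-1$ a quadratic residue and reintroduce period-$2$ points on the fiber $\lambda = 0$); the intended residue class modulo $72$ should have been $43$ or $67$, and these are exactly the classes your constraints $a \equiv 3 \pmod 8$ together with $a \in \{4,7\} \pmod 9$ single out.
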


As remarked to the author by Roger Heath-Brown, the proof of this lemma is an easy combination of the Chinese remainder theorem and Dirichlet's theorem.

\begin{proof}
To prove the lemma it is enough to show that for any positive integer $k$ there exists a prime $p_k$ such that $\omega(p_k)>k$. Let $q_1=2,q_2=3,...,q_r$ be the list of primes up to $k$, and let $q=\prod_{j=3}^r q_j$ be the product of all but the first two. The numbers $q$ and $6$ are coprime, so by the Chinese remainder theorem (see Ireland and Rosen~\cite[\S{3.4}]{book:ireland-rosen1990}) there exists a solution $x=a$ to the system of congruences
\begin{equation*}
\begin{cases}
x \equiv 5 \pmod{72} \\
x \equiv 2 \pmod{q}
\end{cases}.
\end{equation*}

Since $a$ and $72q$ must be coprime, there exists by Dirichlet's theorem (see Serre \cite[\S{VI.4}]{book:serre1973}) a prime $p_k$ such that $p_k\equiv{a}\pmod{72q}$. Now $4 | p_k-1$ but $\frac{p_k-1}{4} \equiv 1 \pmod{18}$ and $6 | p_k+1$ but $\frac{p_k+1}{6} \equiv 1 \pmod{12}$. Hence $\frac{p_k^2-1}{24}$ is coprime to $6$. Moreover, for any prime $5\le{p}\le{k}$ we have $p_k\equiv{a}\equiv{2} \pmod{p}$ so that $p_k-1\equiv{1}\pmod{p}$ and $p_k+1\equiv{3}\pmod{p}$; thus $\frac{p_k^2-1}{24}$ is coprime to $p$. This implies that $\omega(p_k)>k$.
\end{proof}

Theorem \ref{thm:main-thmB} now follows from Lemmas \ref{lem:two} and \ref{lem:three}: Let $(p_k)_{k=1}^{\infty}$ be a sequence as in Lemma \ref{lem:three}; by Lemma \ref{lem:two} the minimal periods of the residual systems $\mathcal{D}_{2,p_k}$ tend to infinity with $k$, so that the system $\mathcal{D}_2$ cannot be strongly residually periodic.

We can also conclude that the dynamical system $\mathcal{D}_1$ is not SRP(1) or SRP(2), since in the proof of Lemma \ref{lem:two} we can see there exist periodic points for the residual system $\mathcal{D}_{1,p}$ of exact period $2$ only for large enough primes of the form $p=4\ell+1$, and no fixed points other than $(0,0,0)$. Together with the conclusion of the previous section, we have proved Theorem~\ref{thm:main-thmA}.

\subsection*{Acknowledgments}
I would like to thank Tatiana Bandman, Fabrizio Barroero, Jung Kyu Canci, Serge Cantat, Roger Heath-Brown, Lars K\"uhne, Boris Kunyavski{\u\i} and Umberto Zannier for their comments and corrections, St\'ephane Lamy for example \eqref{eq:lamy} and the referee for the careful reading, many corrections and helpful comments.

The research for the article was supported by the ERC-Advanced Grant ``Diophantine Problems" (Grant no.\ 267273).


\begin{thebibliography}{10}

\bibitem{article:akbary-ghioca2009}
A.~Akbary and D.~Ghioca.
\newblock Periods of orbits modulo primes.
\newblock {\em J. Number Theory}, 129(11):2831--2842, 2009.

\bibitem{article:bandman-grunewald-kunyavskii2010}
T.~Bandman, F.~Grunewald, and B.~Kunyavski{\u\i}.
\newblock Geometry and arithmetic of verbal dynamical systems on simple groups.
\newblock {\em Groups Geom. Dyn.}, 4(4):607--655, 2010.
\newblock With an appendix by Nathan Jones.

\bibitem{article:baragar1996}
A.~Baragar.
\newblock On the unicity conjecture for {M}arkoff numbers.
\newblock {\em Canad. Math. Bull.}, 39(1):3--9, 1996.

\bibitem{article:bedford-smillie1991}
E.~Bedford and J.~Smillie.
\newblock Polynomial diffeomorphisms of {C2}: currents, equilibrium measure and
  hyperbolicity.
\newblock {\em Inventiones mathematicae}, 103(1):69--100, 1991.

\bibitem{article:cantat2009}
S.~Cantat.
\newblock Bers and {H}\'enon, {P}ainlev\'e and {S}chr\"odinger.
\newblock {\em Duke Math. J.}, 149(3):411--460, 2009.

\bibitem{article:cantat-loray2009}
S.~Cantat and F.~Loray.
\newblock Dynamics on character varieties and {M}algrange irreducibility of
  {P}ainlev\'e {VI} equation.
\newblock {\em Ann. Inst. Fourier (Grenoble)}, 59(7):2927--2978, 2009.

\bibitem{book:cassels1957}
J.~W.~S. Cassels.
\newblock {\em An introduction to {D}iophantine approximation}.
\newblock Hafner Publishing Co., New York, 1972.
\newblock Facsimile reprint of the 1957 edition, Cambridge Tracts in
  Mathematics and Mathematical Physics, No. 45.

\bibitem{book:cohen2007}
H.~Cohen.
\newblock {\em Number theory. {V}ol. {I}. {T}ools and {D}iophantine equations},
  volume 239 of {\em Graduate Texts in Mathematics}.
\newblock Springer, New York, 2007.

\bibitem{article:corvaja-zannier2006}
P.~Corvaja and U.~Zannier.
\newblock On the greatest prime factor of {M}arkov pairs.
\newblock {\em Rend. Semin. Mat. Univ. Padova}, 116:253--260, 2006.

\bibitem{article:el-huti1974}
M.~H. {\`E}l{\cprime}-Huti.
\newblock Cubic surfaces of {M}arkov type.
\newblock {\em Mat. Sb. (N.S.)}, 93(135):331--346, 487, 1974.

\bibitem{article:hutz2009}
B.~Hutz.
\newblock Good reduction of periodic points on projective varieties.
\newblock {\em Illinois J. Math.}, 53(4):1109--1126, 2009.

\bibitem{article:ingram2011}
P.~Ingram.
\newblock Canonical heights for h\'{e}non maps.
\newblock {\em Proc. London Math. Soc., (to appear)}.

\bibitem{book:ireland-rosen1990}
K.~Ireland and M.~Rosen.
\newblock {\em A classical introduction to modern number theory}, volume~84 of
  {\em Graduate Texts in Mathematics}.
\newblock Springer-Verlag, New York, second edition, 1990.

\bibitem{book:lidl-niederreiter1986}
R.~Lidl and H.~Niederreiter.
\newblock {\em Introduction to finite fields and their applications}.
\newblock Cambridge University Press, Cambridge, 1986.

\bibitem{article:markoff1880}
A.~Markoff.
\newblock Sur les formes quadratiques binaires ind\'efinies.
\newblock {\em Math. Ann.}, 17(3):379--399, 1880.

\bibitem{book:serre1973}
J.-P. Serre.
\newblock {\em A course in arithmetic}.
\newblock Springer-Verlag, New York, 1973.
\newblock Translated from the French, Graduate Texts in Mathematics, No. 7.

\bibitem{article:silverman1990}
J.~H. Silverman.
\newblock The {M}arkoff equation {$X^2+Y^2+Z^2=aXYZ$} over quadratic imaginary
  fields.
\newblock {\em J. Number Theory}, 35(1):72--104, 1990.

\bibitem{book:silverman2007}
J.~H. Silverman.
\newblock {\em The arithmetic of dynamical systems}, volume 241 of {\em
  Graduate Texts in Mathematics}.
\newblock Springer, New York, 2007.

\bibitem{article:silverman2008}
J.~H. Silverman.
\newblock Variation of periods modulo {$p$} in arithmetic dynamics.
\newblock {\em New York J. Math.}, 14:601--616, 2008.

\bibitem{article:wishcow2014}
S.~Vishkautsan.
\newblock Arithmetic dynamics on cubic surfaces.
\newblock {\em New York J. Math.}, 20:1--25, 2014.

\bibitem{article:zagier1982}
D.~Zagier.
\newblock On the number of {M}arkoff numbers below a given bound.
\newblock {\em Math. Comp.}, 39(160):709--723, 1982.

\end{thebibliography}

\def\cprime{$'$}

\end{document}